\newcommand{\Z}{\mathbb Z}
\newcommand{\N}{\mathbb N}
\newcommand{\su}{\subseteq}
\newtheorem{thm}{Theorem}[section]
\newtheorem{lem}[thm]{Lemma}
\newtheorem{cor}[thm]{Corollary}
\newtheorem{obs}[thm]{Observation}
\newtheorem{prop}[thm]{Proposition}
\theoremstyle{definition}
\newtheorem{defn}[thm]{Definition}
\newtheorem{rem}[thm]{Remark}
\theoremstyle{remark}
\newtheorem{case}{Case}[thm]
\author{Santanu Mondal, Krishnendu Paul, Shameek Paul
\thanks{E-mail addresses: \texttt{santanu.mondal.math18@gm.rkmvu.ac.in, krishnendu.p.math18@gm.rkmvu.ac.in, shameek.paul@rkmvu.ac.in}}}
\affil{\small Ramakrishna Mission Vivekananda Educational and Research Institute, Belur, Dist. Howrah, 711202, India}
\date{}
\begin{document}
\baselineskip=14.5pt
\title {Zero-sum constants related to the Jacobi symbol} 

\maketitle

\begin{abstract}
For a weight-set $A\su \Z_n$, the $A$-weighted Davenport constant $D_A(n)$ is defined to be the smallest natural number $k$ such that any sequence of $k$ elements in $\Z_n$ has an $A$-weighted zero-sum subsequence and the constant $C_A(n)$ is defined to be the smallest natural number $k$ such that any sequence of $k$ elements in $\Z_n$ has an $A$-weighted zero-sum subsequence of consecutive terms. 

When $n$ is odd, for $x\in \Z_n$ let $\big(\frac{x}{n}\big)$ be the Jacobi symbol and $S(n)=\big\{\,x\in U(n):\big(\frac{x}{n}\big)=1\,\big\}$. We compute these constants for the weight-set $S(n)$. For a prime divisor $p$ of $n$, we also compute these constants for the weight-set $L(n;p)=\big\{\,x\in U(n):\big(\frac{x}{n}\big)=\big(\frac{x}{p}\big)\,\big\}$. We show that even though these weight-sets may have half the size of $U(n)$, they can have the same constants as for $U(n)$. 
\end{abstract}

\bigskip

Keywords: Davenport constant, Jacobi symbol, Zero-sum sequence

\bigskip

AMS Subject Classification: 11B50

\vspace{.3cm}

\section{Introduction}\label{0}

The following definition was given in \cite{AR}.

\begin{defn} 
For a weight-set $A \su \Z_n$, the $A$-weighted Davenport constant $D_A(n)$ is defined to be the least positive integer $k$, such that any sequence in $\Z_n$ of length $k$ has an $A$-weighted zero-sum subsequence. 
\end{defn}

The following definition was given in \cite{SKS1}.

\begin{defn}
For a weight-set $A \su \Z_n$, the $A$-weighted constant $C_A(n)$ is defined to be the least positive integer $k$, such that any sequence in $\Z_n$ of length $k$ has an $A$-weighted zero-sum subsequence of consecutive terms. 
\end{defn}

Let $U(n)$ denote the multiplicative group of units in the ring $\Z_n$, and let $U(n)^2=\{\,x^2:x\in U(n)\,\}$. For an odd prime $p$, let $Q_p$ denote the set $U(p)^2$. For $n$ squarefree, let $\Omega(n)$ denote the number of distinct prime divisors of $n$. 

Let $m$ be a divisor of $n$. We refer to the ring homomorphism $f_{n|m}:\Z_n\to \Z_m$ given by $a+n\Z\mapsto a+m\Z$ as the natural map. As this map sends units to units, we get a group homomorphism $U(n)\to U(m)$ which we also refer to as the natural map.

The Jacobi symbol is defined in Section \ref{s}, for odd $n$. It is denoted by $\big(\frac{x}{n}\big)$. The following are some of the results in this paper. Except for the first result, we assume that $n$ is an odd, squarefree number whose every prime divisor is at least seven. 

\begin{itemize}
\item 
Let $S(n)=\big\{\,x\in U(n):\big(\frac{x}{n}\big)=1\,\big\}$. 
 
If $n$ is prime, then $D_{S(n)}(n)=3$, and $D_{S(n)}(n)=\Omega(n)+1$ otherwise. 
 
If $n$ is prime, then $C_{S(n)}(n)=3$, and $C_{S(n)}(n)=2^{\Omega(n)}$ otherwise.

\item 
Let $L(n;p)=\big\{\,x\in U(n) : \big(\frac{x}{n}\big)=\big(\frac{x}{p}\big)\,\big\}$ where $p$ is a prime divisor of $n$. 
 
If $\Omega(n)=2$, then $D_{L(n;p)}(n)=4$, and $D_{L(n;p)}(n)=\Omega(n)+1$ otherwise.

If $\Omega(n)=2$, then $C_{L(n;p)}(n)=6$, and $C_{L(n;p)}(n)=2^{\Omega(n)}$ otherwise.
  
\end{itemize}

\begin{rem}
In \cite{AH} it was shown that if $A=\Z_n\setminus\{0\}$ and $B=\{1,2,\ldots,\lceil n/2\rceil\}$, we have $D_A(n)=D_B(n)$. We make a similar observation in this paper. In Proposition \ref{indexs}, we show that $S(n)$ is a subgroup of $U(n)$ having index two when $n$ is not a square. From \cite{sg} and \cite{SKS1} we see that when $n$ is odd, we have $D_{U(n)}(n)=\Omega(n)+1$ and $C_{U(n)}(n)=2^{\Omega(n)}$. So from Theorems \ref{dsn} and \ref{csn}, we see that if in addition $n$ is not a prime, we have $D_{S(n)}(n)=D_{U(n)}(n)$ and $C_{S(n)}(n)=C_{U(n)}(n)$. Thus, even though these weight-sets may have different sizes, they can have the same constants. From Theorems \ref{ld} and \ref{lc}, we see that when $\Omega(n)\neq 2$, we have $D_{L(n;p)}(n)=D_{U(n)}(n)$ and $C_{L(n;p)}(n)=C_{U(n)}(n)$.  . 
\end{rem}

If $p$ is a prime divisor of $n$, we use the notation $v_p(n)=r$ to mean that $p^r\mid n$ and $p^{r+1}\nmid n$.    
Let $p$ be a prime divisor of $n$ and $v_p(n)=r$. We denote the image in $U(p^r)$ of $x\in U(n)$ under $f_{n|p^r}$ by $x^{(p)}$. For a sequence $S=(x_1,\,\ldots,\,x_l)$ in $\Z_n$, let $S^{(p)}$ denote the sequence $(x_1^{(p)},\,\ldots,\,x_l^{(p)})$ in $\Z_{p^r}$ which is the image of $S$ under the $f_{n|p^r}$. The following statement is Observation 2.2 in \cite{sg}. 

\begin{obs}\label{obs2}
Let $S$ be a sequence in $\Z_n$. Suppose for every prime divisor $p$ of $n$, the sequence $S^{(p)}$ in $\Z_{p^r}$ is a $U(p^r)$-weighted zero-sum sequence where $r=v_p(n)$. Then $S$ is a $U(n)$-weighted zero-sum sequence.  
\end{obs}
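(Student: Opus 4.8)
The plan is to reduce the statement to the Chinese Remainder Theorem. Write $n = p_1^{r_1}\cdots p_k^{r_k}$ with the $p_j$ distinct primes and $r_j = v_{p_j}(n)$. The natural maps $\Z_n \to \Z_{p_j^{r_j}}$ assemble into a ring isomorphism $\phi \colon \Z_n \to \prod_{j=1}^k \Z_{p_j^{r_j}}$ sending $x$ to $(x^{(p_1)}, \ldots, x^{(p_k)})$; since a ring isomorphism carries units to units, $\phi$ restricts to a group isomorphism $U(n) \to \prod_{j=1}^k U(p_j^{r_j})$. This structural fact is what drives the whole argument.

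Next I would unpack the hypothesis coordinate by coordinate. Writing $S = (x_1, \ldots, x_l)$, the assumption that each $S^{(p_j)}$ is a $U(p_j^{r_j})$-weighted zero-sum sequence means that for every $j$ there exist weights $a_1^{(j)}, \ldots, a_l^{(j)} \in U(p_j^{r_j})$ with $\sum_{i=1}^l a_i^{(j)} x_i^{(p_j)} = 0$ in $\Z_{p_j^{r_j}}$. The idea is then to glue these prime-by-prime weight systems into a single weight system over $\Z_n$: for each index $i$, let $a_i \in \Z_n$ be the unique element with $\phi(a_i) = (a_i^{(1)}, \ldots, a_i^{(k)})$.

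The one point that genuinely needs a word of justification --- and it is really the heart of why the statement holds --- is that each glued weight $a_i$ lies in $A = U(n)$, and not merely in $\Z_n$. This is immediate from the isomorphism above: $a_i$ has a unit in every coordinate, so $\phi(a_i)$ lies in $\prod_{j=1}^k U(p_j^{r_j})$, whence $a_i \in U(n)$. With the weights legitimately in $A$, I would finish by computing
\[
\phi\!\left(\sum_{i=1}^l a_i x_i\right) = \left(\sum_{i=1}^l a_i^{(1)} x_i^{(p_1)}, \ldots, \sum_{i=1}^l a_i^{(k)} x_i^{(p_k)}\right) = (0, \ldots, 0),
\]
using the coordinatewise zero-sum relations, and then concluding $\sum_{i=1}^l a_i x_i = 0$ in $\Z_n$ by injectivity of $\phi$. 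Hence $S$ is an $A$-weighted zero-sum sequence. I do not expect any serious obstacle here; the entire content is the CRT bookkeeping together with the observation that a tuple of units glues to a unit, so the verification reduces to routine checking.
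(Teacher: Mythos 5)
Your argument is correct: the CRT isomorphism $\Z_n\simeq\prod_j\Z_{p_j^{r_j}}$ restricts to $U(n)\simeq\prod_j U(p_j^{r_j})$, so the prime-by-prime weights glue to genuine units of $\Z_n$ and the weighted sum vanishes in every coordinate, hence in $\Z_n$. The paper does not reprove this statement (it simply imports it as Observation 2.2 of the Griffiths reference), and your write-up is the standard proof one would supply.
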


The next result follows from Theorem 1.2 of \cite{YZ} along with Theorem 1 of \cite{L}, and from Corollary 4 of \cite{SKS1}.

\begin{thm}\label{un}
Let $n$ be odd. Then $D_{U(n)}(n)=\Omega(n)+1$ and $C_{U(n)}(n)=2^{\Omega(n)}$. 
\end{thm}

We get the next result from Theorem 2 of \cite{AR} and Theorem 4 of \cite{SKS1}. 

\begin{thm}\label{q}
Let $p$ be an odd prime. Then $C_{Q_p}(p)=D_{Q_p}(p)=3$. 
\end{thm}

The next result is Lemma 3 of \cite{SKS1} which will be used in Theorem \ref{lc2}. 

\begin{lem}\label{clb}
Let $n=mq$. Let $A,B,C$ be subsets of $\Z_n,\Z_m,\Z_q$ respectively. Suppose $f_{n|m}(A)\su B$ and $f_{n|q}(A)\su C$. Then we have $C_A(n)\geq C_B(m)\,C_C(q)$. 
\end{lem} 

We now prove a similar result for the weighted Davenport constant which we will use in Theorem \ref{ld2}. A generalization of this result was proved in Lemma 3.1 of \cite{grn} for abelian groups.

\begin{lem}\label{dadd}
Let $n=mq$. Let $A,B,C$ be subsets of $\Z_n,\Z_m,\Z_q$ respectively. Suppose $f_{n|m}(A)\su B$ and $f_{n|q}(A)\su C$. Then $D_A(n)\geq D_B(m)+D_C(q)-1$. 
\end{lem} 

\begin{proof}
Let $D_B(m)=k$ and $D_C(q)=l$. Assume that $k,l\geq 2$.  There exists a sequence $S_1'=(u_1,\ldots,u_{k-1})$ of length $k-1$ in $\Z_m$ which has no $B$-weighted zero-sum subsequence, and there exists a sequence $S_2'=(v_1,\ldots,v_{l-1})$ of length $l-1$ in $\Z_q$ which has no $C$-weighted zero-sum subsequence. 

As $f_{n|m}$ is onto, for $1\leq i\leq k-1$ there exist $x_i\in \Z_n$ such that $f_{n|m}(x_i)=u_i$ and as $f_{n|q}$ is onto, for $1\leq j\leq l-1$ there exist $y_j\in\Z_n$ such that $f_{n|q}(y_j)=v_j$. Let $S$ be the sequence $(x_1q,\,\ldots,\,x_{k-1}q,\,y_1,\,\ldots,\,y_{\,l-1})$ of length $k+l-2$ in $\Z_n$. 

Let $S_1=(x_1q,\ldots,x_{k-1}q)$ and $S_2=(y_1,\ldots,y_{\,l-1})$. Suppose $S$ has an $A$-weighted zero-sum subsequence $T$. If the sequence $T$ contains some term of $S_2$, by taking the image of $T$ under $f_{n|q}$ we get the contradiction that $S_2'$ has a $C$-weighted zero-sum subsequence, as $f_{n|q}(x_iq)=0$ and as $f_{n|q}(A)\su C$. 

Thus, $T$ does not contain any term of $S_2$ and so $T$ is a subsequence of $S_1$. Let $T'$ be the subsequence of $S_1'$ such that $u_i$ is a term of $T'$ if and only if $x_iq$ is a term of $T$. As $f_{n|m}(A)\su B$, by dividing the $A$-weighted zero-sum which is obtained from $T$ by $q$ and by taking the image under $f_{n|m}$ we get the contradiction that $T'$ is a $B$-weighted zero-sum subsequence of $S_1'$. 

Hence, we see that $S$ does not have any $A$-weighted zero-sum subsequence. As $S$ has length $k+l-2$, it follows that $D_A(n)\geq k+l-1$.

If $k=l=1$, then we are done. Suppose exactly one of them is equal to one. We may assume that $k>1$ and $l=1$. Then we take $S_2'$ to be the empty sequence in the above proof and so $S_1=S$.
\end{proof}

\section{Some results about the weight-set $S(n)$}\label{s}

From this point onwards, we will always assume that $n$ is odd. 

\begin{defn}\label{jacobi}
For an odd prime $p$ and for $a\in U(p)$ the symbol $\Big(\dfrac{a}{p}\Big)$ is the Legendre symbol which is defined as $\Big(\dfrac{a}{p}\Big)=\begin{cases}
  ~1 & \mbox{ if $ a\in Q_p $}\\
  -1 & \mbox{ if $ a\notin Q_p $}
\end{cases}$ . 

For a prime divisor $p$ of $n$, we use the notation $\Big(\dfrac{a}{p}\Big)$ to denote $\bigg(\dfrac{f_{n|p}(a)}{p}\bigg)$ where $a\in U(n)$. Let $n=p_1^{r_1}\ldots p_k^{r_k}$ where the $p_i$'s are distinct primes. 

\smallskip

For $a\in U(n)$, we define the Jacobi symbol $\Big(\dfrac{a}{n}\Big)$ to be $\Big(\dfrac{a}{p_1}\Big)^{r_1}\ldots \Big(\dfrac{a}{p_k}\Big)^{r_k}$. Observe that we have 
$\Big(\dfrac{a}{n}\Big)=\bigg(\dfrac{a^{(p_1)}}{p_1^{r_1}}\bigg)\ldots \bigg(\dfrac{a^{(p_k)}}{p_k^{r_k}}\bigg)$. Let $S(n)$ denote the kernel of the homomorphism $U(n)\to\{1,-1\}$ given by $a\mapsto \Big(\dfrac{a}{n}\Big)$. 
\end{defn}

In Section 3 of \cite{ADU}, the set $S(n)$ was considered as a weight-set. 

\begin{prop}\label{indexs}
$S(n)$ is a subgroup having index two in $U(n)$ when $n$ is a non-square, and $S(n)=U(n)$ when $n$ is a square.  
\end{prop}

\begin{proof}
Let $n=p_1^{r_1}\ldots p_k^{r_k}$ where the $p_i$'s are distinct primes. If $n$ is a square, then all the $r_i$ are even and so $S(n)=U(n)$. If $n$ is not a square, there exists $j$ such that $r_j$ is odd. As for any $p,r\in\N$ the map $f_{p^r|p}$ is onto, by the Chinese Remainder theorem we see that there is a unit $b\in U(n)$ such that $\Big(\dfrac{b}{p_i}\Big)=1$ when $i\neq j$, and $\Big(\dfrac{b}{p_j}\Big)=-1$. It follows that $\Big(\dfrac{b}{n}\Big)=-1$ and so the homomorphism $U(n)\to\{1,-1\}$ given by $a\mapsto \Big(\dfrac{a}{n}\Big)$ is onto. Hence, we see that $S(n)$ has index two in $U(n)$. 
\end{proof}

\begin{rem}
In particular, if $n$ is squarefree then $S(n)$ has index two in $U(n)$. It follows that when $p$ is an odd prime we have $S(p)=Q_p$. 
\end{rem}

\begin{obs}\label{obs}
Let $n=p_1\ldots p_k$ where the $p_i$'s are distinct prime numbers. For $a\in U(n)$, let $\mu(a)$ denote the cardinality of $\{\,1\leq j\leq k:f_{n|p_j}(a)\notin Q_{p_j}\,\}$. Then $a\in S(n)$ if and only if $\mu(a)$ is even.  
\end{obs}

\begin{lem}\label{u2s}
Let $d$ be a proper divisor of $n$ such that $d$ is not a square. Suppose $d$ is coprime with $n'$ where $n'=n/d$. Then $U(n')\su f_{n|n'}(S(n))$.
\end{lem}

\begin{proof}
Let $a'\in U(n')$. By the Chinese remainder theorem, there is an isomorphism $\psi:U(n)\to U(n')\times U(d)$. If $a'\in S(n')$, let $a\in U(n)$ such that $\psi(a)=(a',1)$. If $a'\notin S(n')$, let $b\in U(d)\setminus S(d)$ and let $a\in U(n)$ such that $\psi(a)=(a',b)$. Such a $b$ exists by Proposition \ref{indexs} because $d$ is not a square. Then $a\in S(n)$ and $f_{n|n'}(a)=a'$. 
\end{proof}

\begin{lem}\label{lifts'}
Let $S$ be a sequence in $\Z_n$ and let $d$ be a proper divisor of $n$ which divides every element of $S$. Let $n'=n/d$ and let $d$ be coprime with $n'$. Let $S'$ be the sequence in $\Z_{n'}$ which is the image of the sequence $S$ under $f_{n|n'}$. Let $A\su\Z_n$ and let $A'\su \Z_{n'}$ such that $A'\su f_{n|n'}(A)$. Suppose $S'$ is an $A'$-weighted zero-sum sequence. Then $S$ is an $A$-weighted zero-sum sequence.  
\end{lem}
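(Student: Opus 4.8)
The plan is to unwind the definition of an $A'$-weighted zero-sum sequence, lift the weights from $A'$ back to $A$ using the hypothesis $A'\su f(A)$, and then verify that the resulting $A$-weighted combination already vanishes in $\Z_n$ by checking it is divisible by the two coprime factors $d$ and $n'$ separately. Throughout, recall that $f:\Z_n\to\Z_{n'}$ is the natural map, hence a ring homomorphism, so it respects weighted sums.

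Write $S:(x_1,\ldots,x_l)$, so that $S':(f(x_1),\ldots,f(x_l))$. Since $S'$ is an $A'$-weighted zero-sum sequence, there exist $a_1',\ldots,a_l'\in A'$ with $\sum_{i=1}^l a_i'\,f(x_i)=0$ in $\Z_{n'}$. Because $A'\su f(A)$, for each $i$ I can choose a weight $a_i\in A$ with $f(a_i)=a_i'$. I then set $y=\sum_{i=1}^l a_i x_i\in\Z_n$; producing these $a_i\in A$ and showing $y=0$ is exactly what it means for $S$ to be an $A$-weighted zero-sum sequence, so this is the whole goal.

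The decisive step is to split the vanishing of $y$ into two coprime congruences. On one hand, since $f$ is a ring homomorphism, $f(y)=\sum f(a_i)f(x_i)=\sum a_i'\,f(x_i)=0$, so $n'$ divides (a representative of) $y$. On the other hand, $d$ divides every term of $S$, hence divides each $a_i x_i$ and therefore divides $y$. Thus $y$ is divisible by both $d$ and $n'$; as $\gcd(d,n')=1$ and $n=dn'$, it follows that $n\mid y$, i.e.\ $y=0$ in $\Z_n$, which completes the argument.

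I do not expect a genuine obstacle here, since the content is essentially bookkeeping; the only point that needs care is distinguishing "divides" in $\Z_n$ from "divides" in $\Z$. To keep this rigorous I would fix integer representatives $X_i$ of $x_i$, where the hypothesis that $d$ divides every element of $S$ means precisely $d\mid X_i$, together with integer representatives $A_i$ of $a_i$, and then reason about the ordinary integer $N=\sum_i A_i X_i$: reduction modulo $n'$ gives $N\equiv 0$, so $n'\mid N$, while $d\mid X_i$ for all $i$ gives $d\mid N$. Coprimality of $d$ and $n'$ forces $n=dn'\mid N$, and since $N$ is a representative of $y$ this is exactly $y=0$ in $\Z_n$.
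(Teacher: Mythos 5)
Your proof is correct and follows essentially the same route as the paper's: lift the weights via $A'\su f(A)$, observe that the resulting sum is killed by $f$ (hence divisible by $n'$) and divisible by $d$, and conclude from coprimality that it vanishes in $\Z_n$. Your extra care with integer representatives is a sound clarification but not a different argument.
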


\begin{proof}
Let $S=(x_1,\ldots,x_k)$ be a sequence in $\Z_n$ and let $S'=(x_1',\ldots,x_k')$ where $x_i'=f_{n|n'}(x_i)$ for $1\leq i\leq k$. Suppose $S'$ is an $A'$-weighted zero-sum sequence. Then for any $1\leq i\leq k$, there exist $a_i'\in A'$ such that $a_1'x_1'+\cdots+a_k'x_k'=0$. Since  $A'\su f_{n|n'}(A)$, for $1\leq i\leq k$ there exist $a_i\in A$ such that $f_{n|n'}(a_i)=a_i'$. As $a_1'x_1'+\cdots+a_k'x_k'=0$ in $\Z_{n'}$, it follows that $f_{n|n'}(a_1x_1+\cdots+a_kx_k)=0$. Let $x=a_1x_1+\cdots+a_kx_k\in \Z_n$. As $f_{n|n'}(x)=0$, we see that $n'\mid x$ and as every term of $S$ is divisible by $d$, we see that $d\mid x$. Now as $d$ is coprime with $n'$, it follows that $x$ is divisible by $n=n'd$ and so $x=0$. Thus, $S$ is an $A$-weighted zero-sum sequence. 
\end{proof}

The next result is Lemma 2.1 (ii) of \cite{sg}, which we restate here using our terminology.

\begin{lem}\label{gri}
Let $p$ be an odd prime. If a sequence $S$ in $\Z_{p^r}$ has at least two terms coprime to $p$, then $S$ is a $U(p^r)$-weighted zero-sum sequence. 
\end{lem}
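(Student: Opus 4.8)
The plan is to reduce the claim to the statement that, for an odd prime $p$, every element of $\Z_{p^r}$ is a sum of two units. Recall that the units of $\Z_{p^r}$ are precisely the elements coprime to $p$, so $A=U(p^r)$ consists of exactly these. Writing $S:(x_1,\ldots,x_l)$, I would relabel so that $x_1$ and $x_2$ are two terms coprime to $p$, and assign the weight $1$ to every remaining term. Setting $c=x_3+\cdots+x_l$ (the empty sum if $l=2$), it then suffices to find units $a_1,a_2\in U(p^r)$ with $a_1x_1+a_2x_2=-c$; together with the weight $1$ on the other terms, this exhibits $S$ as an $A$-weighted zero-sum sequence.

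Because $x_1$ and $x_2$ are units, as $a_1$ ranges over $U(p^r)$ the product $a_1x_1$ ranges over all of $U(p^r)$, and likewise $a_2x_2$ ranges over all of $U(p^r)$. Hence the existence of the required $a_1,a_2$ is equivalent to $-c$ lying in the sumset $U(p^r)+U(p^r)$. So the whole lemma follows once I show that every element of $\Z_{p^r}$ can be written as a sum of two units.

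For this final step, given any $t\in\Z_{p^r}$ I want to choose a unit $u$ such that $t-u$ is also a unit; then $t=u+(t-u)$ is the desired decomposition. An element of $\Z_{p^r}$ is a unit exactly when it is nonzero modulo $p$, so the conditions on $u$ become $u\not\equiv 0\pmod p$ and $u\not\equiv t\pmod p$. These forbid at most two residue classes modulo $p$, and since $p$ is odd we have $p\geq 3$ classes available, leaving at least one admissible class; any lift of such a class to $\Z_{p^r}$ gives a valid $u$.

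This last step is the only point requiring genuine care, and it is where the hypothesis that $p$ is odd is essential: for $p=2$ the sole unit is $1$, so $U(2^r)+U(2^r)$ need not be all of $\Z_{2^r}$ and the conclusion can fail. The remainder of the argument is bookkeeping. I would also note in passing that the same counting argument shows the two hypothesized units suffice no matter how long $S$ is, since all other terms are absorbed into the single constant $c$.
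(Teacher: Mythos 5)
Your proof is correct. Note that the paper itself gives no proof of this lemma --- it is quoted as Lemma 2.1(ii) of the Griffiths reference --- so there is nothing internal to compare against; but your argument is the standard one and is sound. The reduction (give weight $1$ to all terms other than the two units $x_1,x_2$, absorb them into a constant $c$, and observe that $a_1x_1$ and $a_2x_2$ each sweep out all of $U(p^r)$) correctly turns the lemma into the claim that $U(p^r)+U(p^r)=\Z_{p^r}$, and your residue-count modulo $p$ (at most two forbidden classes out of $p\geq 3$) settles that, including the case $t=0$ via $1+(-1)$. Your remark about where oddness of $p$ enters is also the right one.
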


The next result is Lemma 1 in \cite{CM}. 

\begin{lem}\label{cm'}
Let $A=U(n)^2$ where $n=p^r$ and $p\geq 7$ is a prime. Let $x_1,x_2,x_3\in U(n)$. Then $Ax_1+Ax_2+Ax_3=\Z_n$. 
\end{lem}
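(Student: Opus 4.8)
The plan is to prove the prime case $r=1$ directly and then lift the result to $\Z_{p^r}$ by a Hensel-type argument. For $r=1$ we have $A=Q_p$, and since each $x_i$ is a unit, each set $Q_px_i$ has exactly $(p-1)/2$ elements. Applying the Cauchy--Davenport theorem to the three sets,
\[
|Q_px_1+Q_px_2+Q_px_3|\ \geq\ \min\Big(p,\ \tfrac{p-1}{2}+\tfrac{p-1}{2}+\tfrac{p-1}{2}-2\Big)=\min\Big(p,\ \tfrac{3p-7}{2}\Big).
\]
For $p\geq 7$ we have $\tfrac{3p-7}{2}\geq p$, so the threefold sumset has at least $p$ elements and therefore equals $\Z_p$. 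This is exactly where the hypothesis $p\geq 7$ enters, and it settles the case $n=p$ (note the bound fails for $p=3,5$).

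Next I would record the structural fact underlying the lift. The natural map $\pi:U(p^r)\to U(p)$ is onto with kernel $K=1+p\Z_{p^r}$, a group of odd order $p^{r-1}$. Since squaring is a bijection on a group of odd order, every element of $K$ is a square, so $K\su U(p^r)^2=A$; and because $\pi$ is a surjection carrying squares onto squares, $\pi(A)=Q_p$. As $A$ is a subgroup of $U(p^r)$, for each $\alpha\in Q_p$ the fibre $\{a\in A:\pi(a)=\alpha\}$ is the coset $\tilde a K$ of any lift $\tilde a$ of $\alpha$. Because $K=1+p\Z_{p^r}$ and $\tilde a$ is a unit, this coset is exactly the full residue class $\tilde a+p\Z_{p^r}=\{z\in\Z_{p^r}:z\equiv \alpha \pmod p\}$. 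In other words, once the residues mod $p$ of the weights are fixed, the weights $a_i\in A$ may be chosen to be \emph{any} elements of the prescribed residue classes.

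With these two ingredients the lift is routine. Given a target $c\in\Z_{p^r}$, reduce mod $p$ and use the prime case to find $\alpha_1,\alpha_2,\alpha_3\in Q_p$ with $\alpha_1\bar{x}_1+\alpha_2\bar{x}_2+\alpha_3\bar{x}_3=\bar{c}$ in $\Z_p$. Choose lifts $a_1^{(0)},a_2^{(0)},a_3^{(0)}\in A$ of $\alpha_1,\alpha_2,\alpha_3$; then $a_1^{(0)}x_1+a_2^{(0)}x_2+a_3^{(0)}x_3-c\in p\Z_{p^r}$, say it equals $pd$. Replacing $a_1^{(0)}$ by $a_1=a_1^{(0)}+pb_1$ (still in $A$, by the fibre description) changes the sum by $pb_1x_1$, so it suffices to solve $b_1x_1\equiv -d\pmod{p^{r-1}}$, which has a solution since $x_1$ is invertible mod $p^{r-1}$. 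For this $b_1$ we get $a_1x_1+a_2^{(0)}x_2+a_3^{(0)}x_3=c$ with all weights in $A$, which proves $Ax_1+Ax_2+Ax_3=\Z_{p^r}$.

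I expect the substantive obstacle to be the lifting step, and within it the verification that $1+p\Z_{p^r}\su A$, which is what forces the fibres of $\pi|_A$ to be complete residue classes mod $p$. Once that is established the correction reduces to a single linear congruence mod $p^{r-1}$ in one of the weights, and the prime case is immediate from Cauchy--Davenport. Finally I would remark that the construction is symmetric in $x_1,x_2,x_3$, so perturbing the first weight causes no loss of generality.
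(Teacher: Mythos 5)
Your proof is correct. A small check on the key inequality: Cauchy--Davenport gives $|Q_px_1+Q_px_2+Q_px_3|\geq\min\bigl(p,\tfrac{3(p-1)}{2}-2\bigr)=\min\bigl(p,\tfrac{3p-7}{2}\bigr)$, and $\tfrac{3p-7}{2}\geq p$ exactly when $p\geq 7$, so the prime case is right and the threshold appears for the right reason; the lifting step is also sound, since the kernel $K=1+p\Z_{p^r}$ of $U(p^r)\to U(p)$ has odd order and therefore consists of squares, which is precisely what makes each fibre of $A\to Q_p$ a full residue class mod $p$ and lets you absorb the error term by one linear congruence mod $p^{r-1}$. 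Note, however, that the paper does not prove this lemma at all: it quotes it as Lemma 1 of \cite{CM}, and the argument there (as one can infer from the paper's companion Lemma \ref{kneser}, whose proof is said to be ``similar'' and whose setup emphasizes that the two sets involved have equal size) applies Kneser's addition theorem to the three sets $Ax_1,Ax_2,Ax_3$ directly inside $\Z_{p^r}$, using that $U(p^r)^2$ has index $2$ in the cyclic group $U(p^r)$ and ruling out the possible nontrivial stabilizers $p^j\Z_{p^r}$. Your route is genuinely different and arguably more elementary: it needs only Cauchy--Davenport in $\Z_p$ plus a Hensel-type correction, at the cost of a two-stage argument; the Kneser route is a single application of a stronger sumset theorem but requires handling the stabilizer case. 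Either way the conclusion and the hypothesis $p\geq 7$ come out the same, and your proof would serve as a self-contained substitute for the citation.
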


For the theorem in the next section, we need the following lemma which is similar to Lemma \ref{cm'}. We observe that when $n=p^r$ where $p$ is an odd prime and $r\in\N$, then $U(n)$ is a cyclic group (see \cite{IR}) and so -1 is the unique element in $U(n)$ of order 2. Thus, the map $U(n)\to U(n)$ given by $x\mapsto x^2$ has  kernel $\{1,-1\}$ and so $U(n)^2$ is a subgroup of $U(n)$ having index 2. Hence, $|A_1|=|A_2|$ in the next lemma and so its proof is similar to the proof of Lemma 1 of \cite{CM}.

\begin{lem}\label{kneser}
Let $A_1=U(n)^2$ and $A_2=U(n)\setminus U(n)^2$, where $n=p^r$ and $p\geq 7$ is a prime. Let $x_1,x_2,x_3\in U(n)$ and let $f:\{1,2,3\}\to\{1,2\}$ be any function. Then $A_{_{f(1)}}x_1+A_{_{f(2)}}x_2+A_{_{f(3)}}x_3=\Z_n$. 
\end{lem}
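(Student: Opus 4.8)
The plan is to deduce this directly from Lemma \ref{cm'}, exploiting the fact that $A_1=U(n)^2$ is a subgroup of $U(n)$ of index $2$, as recorded in the paragraph preceding the statement. Consequently its complement $A_2=U(n)\setminus U(n)^2$ is a single coset of $A_1$: fixing any non-square unit $g\in U(n)\setminus U(n)^2$, I would observe that $A_2=gA_1=A_1g$, the last equality holding because $U(n)$ is abelian.

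Granting this, every summand $A_{f(i)}x_i$ is just a dilate of the subgroup $A_1$ by a unit. For $1\leq i\leq 3$ put $z_i=x_i$ when $f(i)=1$ and $z_i=gx_i$ when $f(i)=2$; since $x_i$ and $g$ are units, each $z_i$ lies in $U(n)$, and in both cases $A_{f(i)}x_i=A_1z_i$. Hence
\[
A_{f(1)}x_1+A_{f(2)}x_2+A_{f(3)}x_3=A_1z_1+A_1z_2+A_1z_3,
\]
and Lemma \ref{cm'}, applied to the three units $z_1,z_2,z_3$, yields that this sumset is all of $\Z_n$.

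Along this route there is essentially no obstacle, since the whole argument reduces to the remark that a non-square dilate of $A_1$ is again a unit-dilate of $A_1$, which merges all choices of $f$ into the single case already covered by Lemma \ref{cm'}. The discussion before the lemma---the remark that $|A_1|=|A_2|$ together with the pointer to the method of Lemma 1 of \cite{CM}---suggests instead a self-contained proof that re-runs the Kneser-theorem argument of \cite{CM} directly for mixed summands. On that alternative the genuine difficulty would be bounding $|A_{f(1)}x_1+A_{f(2)}x_2+A_{f(3)}x_3|$ from below, and the identity $|A_1|=|A_2|=\tfrac12|U(n)|$ is precisely what keeps the cardinality bookkeeping identical to the all-squares case. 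I would nonetheless favour the reduction above, as it sidesteps repeating that computation entirely.
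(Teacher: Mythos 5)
Your reduction is correct, and it is genuinely different from (and cleaner than) what the paper does. The paper offers no self-contained argument for Lemma \ref{kneser}: it records that $U(n)^2$ has index $2$ in the cyclic group $U(n)$, concludes $|A_1|=|A_2|$, and then asserts that the proof is ``similar to the proof of Lemma 1 of \cite{CM}'' --- i.e.\ it proposes re-running the sumset/cardinality argument of \cite{CM} with the mixed summands, the point of the remark $|A_1|=|A_2|$ being that all the size estimates go through unchanged. You instead use the index-$2$ fact in a sharper way: $A_2$ is the unique nontrivial coset $gA_1$ of the subgroup $A_1$, so each $A_{f(i)}x_i$ equals $A_1z_i$ for the unit $z_i\in\{x_i,gx_i\}$, and the whole statement collapses to Lemma \ref{cm'} applied to $z_1,z_2,z_3$. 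Every step checks out: $U(n)=U(p^r)$ is abelian, $z_i\in U(n)$, and Lemma \ref{cm'} requires exactly three units as dilates of $U(n)^2$. What your route buys is a complete two-line proof that uses Lemma \ref{cm'} as a black box and needs no repetition of the Kneser-type computation; what the paper's route would buy is independence from the precise form of Lemma \ref{cm'} (it reproves the mixed case from scratch), at the cost of actually having to redo that argument. Your version is the one I would keep.
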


\begin{cor}\label{cm}
Let $n=p^r$ and $p\geq 7$ be a prime. Let $S$ be a sequence in $\Z_n$ such that at least three terms of $S$ are in $U(n)$. Then $S$ is a $U(n)^2$-weighted zero-sum sequence. 
\end{cor}

\begin{proof}
Let $S=(x_1,x_2,\ldots,x_k)$ be a sequence in $\Z_n$ as in the statement of the corollary and $A=U(n)^2$. Without loss of generality, we may assume that $x_1,x_2,x_3\in U(n)$. If $k=3$, let $y=0$. If $k>3$, let $y=x_4+\cdots+x_k$. By Lemma \ref{cm'}, we get $-y\in Ax_1+Ax_2+Ax_3$. So there exists $a_1,a_2,a_3\in A$ such that $a_1x_1+a_2x_2+a_3x_3+y=0$. Thus, $S$ is an $A$-weighted zero-sum sequence. 
\end{proof}

\begin{rem}
The conclusion of Corollary \ref{cm} may not hold when $p<7$. One can check that the sequence $(1,1,1)$ in $\Z_n$ is not a $U(n)^2$-weighted zero-sum sequence, when $n$ is 2 or 5 and the sequence $(1,2,1)$ in $\Z_3$ is not a $U(3)^2$-weighted zero-sum sequence. 
\end{rem}

\section{The constants $D_{S(n)}(n)$ and $C_{S(n)}(n)$}

\begin{lem}\label{gs}
Let $n$ be squarefree and $S=(x_1,\ldots,x_l)$ be a sequence in $\Z_n$. Suppose given any prime divisor $p$ of $n$, at least two terms of $S$ are coprime to $p$. If at most one term of $S$ is a unit, then $S$ is an $S(n)$-weighted zero-sum sequence. 
\end{lem}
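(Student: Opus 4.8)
The plan is to pass to the coordinates given by the Chinese Remainder Theorem and solve the zero-sum condition prime by prime, treating the at most one unit term as the only term that needs special care.

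Write $n=p_1\cdots p_k$ with the $p_j$ distinct primes, each at least $7$, so that $\Z_n\cong \Z_{p_1}\times\cdots\times\Z_{p_k}$ and $U(n)\cong \prod_j U(p_j)$. For each $j$ set $T_j=\{\,i : p_j\nmid x_i\,\}$; by hypothesis $|T_j|\ge 2$, and $x_i$ is a unit precisely when $i\in T_j$ for every $j$, which by hypothesis happens for at most one index. Producing an $A$-weighted zero-sum amounts to choosing units $a_i^{(p_j)}\in U(p_j)$ for all $i,j$ so that (i) $\sum_{i\in T_j} a_i^{(p_j)}x_i^{(p_j)}=0$ in $\Z_{p_j}$ for every $j$, and (ii) each $a_i$ lies in $S(n)$; by Observation \ref{obs}, condition (ii) is the requirement that for each $i$ the number of coordinates $j$ with $a_i^{(p_j)}\notin Q_{p_j}$ is even.

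First I would fix the weights on the \emph{active} indices of each coordinate, i.e.\ on $T_j$. When $|T_j|\ge 3$ I would apply Corollary \ref{cm} in $\Z_{p_j}$ to obtain weights $a_i^{(p_j)}\in U(p_j)^2=Q_{p_j}$ (all residues) with $\sum_{i\in T_j}a_i^{(p_j)}x_i^{(p_j)}=0$. When $|T_j|=2$, Lemma \ref{gri} gives a unit solution, and the two active weights are proportional, so one of them may be prescribed arbitrarily while the other is then determined. I would use this to arrange that in every coordinate the weight of the unit term (if one exists) is a residue: in a coordinate with $|T_j|\ge 3$ this is automatic from Corollary \ref{cm}, and in a coordinate with $|T_j|=2$ containing the unit I would set the unit's weight equal to $1$ and solve for its partner. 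Since the unit lies in every $T_j$, this makes all of its coordinates residues, so its Jacobi symbol is $+1$ and (ii) holds for it.

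It remains to secure (ii) for the non-unit terms, and here I would use the \emph{inactive} coordinates, whose weights are still completely free because they are multiplied by $x_i^{(p_j)}=0$ and so do not affect any coordinate sum. Every non-unit term has at least one such free coordinate; for each non-unit term I would choose one free coordinate and pick the residue class of its weight there so that the total number of non-residue coordinates of that $a_i$ becomes even, setting any remaining free weights equal to $1$. These choices are independent from term to term and leave all the coordinate sums of the previous step untouched, so condition (ii) then holds for every term; by the Chinese Remainder Theorem the resulting weights give an $A$-weighted zero-sum, as required. The step I expect to be the crux is controlling the unit term: unlike every other term it meets every $T_j$ and has no free coordinate, so its Jacobi symbol cannot be repaired after the fact and must instead be forced to $+1$ while the coordinate sums are solved. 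This is exactly what the residue-valued solutions of Corollary \ref{cm} (available because every prime is at least $7$) and the proportionality in the two-term coordinates provide, and the hypothesis that at most one term is a unit guarantees that a single term needs this treatment while all others are freely correctable.
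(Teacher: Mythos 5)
Your overall strategy coincides with the paper's: decompose via the Chinese Remainder Theorem, solve each coordinate using Lemma \ref{gri}, and then repair the Jacobi-symbol condition for each non-unit term through a coordinate in which that term vanishes. The one place you genuinely diverge is the treatment of the (at most one) unit term, and that is where a gap appears. You force all of its coordinate weights into $Q_{p_j}$ by invoking Corollary \ref{cm} in every coordinate $j$ with $|T_j|\ge 3$, but Corollary \ref{cm} requires $p_j\ge 7$, whereas the lemma assumes only that $n$ is odd and squarefree. For $p_j\in\{3,5\}$ the conclusion of Corollary \ref{cm} can fail (see the remark following it: $(1,1,1)$ is not a $U(5)^2$-weighted zero-sum sequence in $\Z_5$), so your construction of an all-residue weight vector for the unit term breaks down there, and your argument only proves the lemma under the extra hypothesis that every prime divisor of $n$ is at least $7$. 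That happens to cover every application the paper makes of this lemma, but it is not the statement as given.

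The paper avoids this with a cheaper device valid for every odd prime: take an arbitrary $U(p_j)$-weighted solution in each coordinate (Lemma \ref{gri} alone), and if the resulting Jacobi symbol of the unit term's weight is $-1$, multiply the entire first row of the weight matrix by a nonresidue in $U(p_1)$. This preserves the zero-sum in that coordinate (a scalar multiple of $0$ is $0$) and flips the residue class of the unit term's weight there; the weights of the other terms in that row are also flipped, but each of those terms is a non-unit and still has a vanishing coordinate through which its parity can be corrected afterwards, exactly as in your final step. Replacing your appeal to Corollary \ref{cm} by this row-rescaling yields the lemma in its stated generality.
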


\begin{proof}
As we have assumed that $n$ is odd and as for every prime divisor $p$ of $n$ at least two terms of $S$ are coprime to $p$, by Lemma \ref{gri} for every prime divisor $p$ of $n$ the sequence $S^{(p)}=(x_1^{(p)},\ldots,x_l^{(p)})$ is a $U(p)$-weighted zero-sum sequence. Let  $n=p_1\ldots p_k$ where the $p_i$'s are distinct primes. For $1\leq i\leq k$ there exist $c_{i,1},\ldots,c_{i,l}\in U(p_i)$ such that $c_{i,1}x_1^{(p_i)}+\cdots +c_{i,l}x_l^{(p_i)}=0$. 

By Observation \ref{obs2}, for $1\leq j\leq l$ there exist $a_j\in U(n)$ such that $a_1x_1+\ldots+a_lx_l=0$ and such that for $1\leq i\leq k$ and we have $(a_1^{(p_i)},\ldots,a_l^{(p_i)})=(c_{i,1},\ldots,c_{i,l})$. Let $X$ denote the $k\times l$ matrix whose $i^{th}$ row is $(x_1^{(p_i)},\ldots,x_l^{(p_i)})$ and let $C$ denote the $k\times l$ matrix whose $i^{th}$ row is $(c_{i,1},\ldots,c_{i,l})$. We want to modify the entries of the matrix $C$ so that for $1\leq j\leq l$ the corresponding $a_j\in U(n)$ which we get by the Chinese remainder theorem are in $S(n)$.

Suppose the $j^{th}$ column of $X$ has a zero. Then there exists $1\leq i\leq k$ such that $x_j^{(p_i)}=0$. By making a suitable choice for $c_{i,j}$ we can ensure that the corresponding $a_j\in U(n)$ is in $S(n)$ as $\Big(\dfrac{a_j}{n}\Big)=\Big(\dfrac{c_{_{1,j}}}{p_{_1}}\Big)\ldots \Big(\dfrac{c_{_{k,j}}}{p_{_{k}}}\Big)$. Thus, we can modify the $j^{th}$ column of $C$ so that the corresponding $a_j\in U(n)$ is in $S(n)$. 

We observe that a term $x_j$ of $S$ is a unit if and only if the $j^{th}$ column of $X$ does not have a zero. Hence, if no term of $S$ is a unit then each column of $X$ has a zero. So in this case $S$ is an $S(n)$-weighted zero-sum sequence. 

Suppose exactly one term of $S$ is a unit, say $x_{j_0}$. Then the $j_0^{th}$ column of $X$ does not have a zero and there is a zero in all the other columns of $X$. By multiplying the $1^{st}$ row of $C$ by a suitable element of $U(p_1)$, we can modify $c_{1,j_0}$ so that $a_{j_0}\in S(n)$. As the other columns of $X$ have a zero, we can modify those columns of $C$ suitably so that $a_j\in S(n)$ for $j\neq j_0$. Thus, $S$ is an $S(n)$-weighted zero-sum sequence. 
\end{proof}

\begin{lem}\label{gs'}
Let $n$ be squarefree, every prime divisor of $n$ be at least seven and $S=(x_1,\ldots,x_l)$ be a sequence in $\Z_n$ such that for every prime divisor of $n$, at least two terms of $S$ are coprime to it. Suppose there is a prime divisor $p$ of $n$ such that at least three terms of $S$ are coprime to $p$. Then $S$ is an $S(n)$-weighted zero-sum sequence.
\end{lem}

\begin{proof}
If $\Omega(n)=1$, then $n$ is a prime say $p$. As at least three terms of $S$ are coprime to $p$, so by Corollary \ref{cm} we have $S$ is a $Q_p$-weighted zero-sum sequence.

Let $\Omega(n)\geq 2$. As there are at least three units in the sequence $S^{(p)}$, by Lemma \ref{gri} it is a $U(p)$-weighted zero-sum sequence. So for $1\leq i\leq l$ there exist $b_i\in U(p)$ such that $b_1x_1^{(p)}+\cdots+b_lx_l^{(p)}=0$. Let us assume that $x_1^{(p)},x_2^{(p)}$ and $x_3^{(p)}$ are units. A similar argument will work in the general case. We want to choose the $b_i$'s so that the corresponding $U(n)$-weighted zero-sum for $S$ (which we get using Observation \ref{obs2}, as in Lemma \ref{gs}) is an $S(n)$-weighted zero-sum. 

Using Observation \ref{obs} we choose the units $\{\,b_i:4\leq i\leq l\,\}$ so that for $4\leq i\leq l$ we have $a_i\in S(n)$.
Let us denote the negative of $b_4x_4^{(p)}+\cdots+b_lx_l^{(p)}$ by $y$. By Lemma \ref{kneser} and using Observation \ref{obs} we can choose $b_1,b_2,b_3\in U(p)$ so that $a_1,a_2,a_3\in S(n)$ and $b_1x_1^{(p)}+b_2x_2^{(p)}+b_3x_3^{(p)}=y$. Thus, $S$ is an $S(n)$-weighted zero-sum sequence.
\end{proof}

\begin{thm}\label{dsn}
Let $n$ be squarefree. If $n$ is prime, we have $D_{S(n)}(n)=3$. If $n$ is not a prime and every prime divisor of $n$ is at least seven, we have $D_{S(n)}(n)=\Omega(n)+1$. 
\end{thm}

\begin{proof}
From Theorem \ref{un} we have $D_{U(n)}(n)=\Omega(n)+1$. As $S(n)\su {U(n)}$ it follows that $D_{S(n)}(n)\geq D_{U(n)}(n)$ and so $D_{S(n)}(n)\geq \Omega(n)+1$. If $\Omega(n)=1$ then $n=p$ where $p$ is a prime and $S(n)=Q_p$. So by Theorem \ref{q}, we have $D_{S(n)}(n)=3$. 

Let $n$ be squarefree and let $\Omega(n)\geq 2$. We claim that $D_{S(n)}(n)\leq \Omega(n)+1$. Let $S=(x_1,\ldots,x_l)$ be a sequence in $\Z_n$ of length $l=k+1$ where $k=\Omega(n)$. We have to show that $S$ has an ${S(n)}$-weighted zero-sum subsequence. If any term of $S$ is zero, then that term will give us an ${S(n)}$-weighted zero-sum subsequence of length 1.  

\begin{case}
There is a prime divisor $p$ of $n$ such that at most one term of $S$ is coprime to $p$.
\end{case}

Let us assume without loss of generality that $x_i$ is divisible by $p$ for $2\leq i\leq l$ and let $T$ denote the subsequence $(x_2,\ldots,x_l)$ of $S$. Let $n'=n/p$ and let $T'$ be the sequence in $\Z_{n'}$ which is the image of $T$ under $f_{n|n'}$. From Theorem \ref{un}, we see that $D_{U(n')}(n')=\Omega(n')+1$. As $T'$ has length $l-1=\Omega(n)=\Omega(n')+1$, it follows that $T'$ has a $U(n')$-weighted zero-sum subsequence. As $n$ is squarefree, $p$ is coprime to $n'$. Thus, by Lemmas \ref{u2s} and \ref{lifts'} we see that $S$ has an $S(n)$-weighted zero-sum subsequence. 

\begin{case}
For each prime divisor $p$ of $n$, exactly two terms of $S$ are coprime to $p$.
\end{case}

Suppose $S$ has at most one unit. By Lemma \ref{gs}, we see that $S$ is an $S(n)$-weighted zero-sum sequence. So we can assume that $S$ has at least two units. By the assumption in this subcase, we see that $S$ will have exactly two units and the other terms of $S$ will be zero. As $S$ has length $k+1$ and as $k\geq 2$, some term of $S$ is zero. 

\begin{case}
For every prime divisor $p$ of $n$ at least two terms of $S$ are coprime to $p$, and there is a prime divisor $p'$ of $n$ such that at least three terms of $S$ are coprime to $p'$.
\end{case}

In this case, we are done by Lemma \ref{gs'}.
\end{proof}

\begin{thm}\label{csn}
Let $n$ be squarefree. If $n$ is a prime, we have $C_{S(n)}(n)=3$. If $n$ is not a prime and every prime divisor of $n$ is at least seven, we have $C_{S(n)}(n)=2^{\Omega(n)}$. 
\end{thm}

\begin{proof}
If $n=p$ where $p$ is a prime then $S(n)=Q_p$. As $p$ is odd, from Theorem \ref{q} we get that $C_{S(n)}(n)=3$. Let $n=p_1\ldots p_k$ where $k\geq 2$. As $S(n)\su U(n)$, it follows that $C_{S(n)}(n)\geq C_{U(n)}(n)$. As $n$ is odd, from Theorem \ref{un} we have $C_{S(n)}(n)\geq 2^k$. 

Let $S=(x_1,\ldots,x_l)$ be a sequence in $\Z_n$ of length $l=2^k$. If we show that $S$ has an $S(n)$-weighted zero-sum subsequence of consecutive terms, it will follow that $C_{S(n)}(n)\leq 2^k$. If any term of $S$ is zero, we get an $S(n)$-weighted zero-sum subsequence of $S$ of length 1. 

\begin{case} 
There is a prime divisor $p$ of $n$ such that at most one term of $S$ is coprime to $p$.
\end{case}

We will get a subsequence, say $T$, of consecutive terms of $S$ of length $l/2$ whose all terms are divisible by $p$. Let $n'=n/p$ and let $T'$ be the image of $T$ under $f_{n|n'}$. From Theorem \ref{un}, we have $C_{U(n')}(n')=2^{\Omega(n')}$. As the length of $T'$ is $2^{\Omega(n')}$, it follows that $T'$ has a $U(n')$-weighted zero-sum subsequence of consecutive terms. As $n'$ is coprime with $p$, by Lemmas \ref{u2s} and \ref{lifts'} we get that $T$ (and hence $S$) has an $S(n)$-weighted zero-sum subsequence of consecutive terms.

\begin{case}
For each prime divisor $p$ of $n$, exactly two terms of $S$ are coprime to $p$.
\end{case}

In this case, as $\Omega(n)=k$ there are at most $2k$ non-zero terms in $S$. Let $k\geq 3$. As $S$ has length $2^k$ and as $2^k>2k$, some term of $S$ is zero and we are done. 
If $k=2$, then $S$ has length 4. If $S$ has at most one unit, by Lemma \ref{gs} this sequence is an $S(n)$-weighted zero-sum sequence. So we can assume that $S$ has at least two units. By the assumption in this subcase we see that $S$ has exactly two units and so the other two terms of $S$ will be zero.   

\begin{case}
For every prime divisor $p$ of $n$ at least two terms of $S$ are coprime to $p$, and there is a prime divisor $p'$ of $n$ such that at least three terms of $S$ are coprime to $p'$.
\end{case}

In this case, we are done by Lemma \ref{gs'}.
\end{proof}

\section{Some results about the weight-set $L(n;p)$}\label{l}

To determine the constant $D_{S(n)}(n)$ for some non-squarefree $n$, we consider the following subset of $\Z_n$ as a weight-set. 

\begin{defn}
Let $p$ be a prime divisor of $n$ where $n$ is odd. We define 
$$L(n;p)=\Big\{\,a\in U(n)\,:\,\Big(\dfrac{a}{n}\Big) =\Big(\dfrac{a}{p}\Big)\,\Big\}$$  
\end{defn}

Consider the homomorphism $\varphi:U(n)\to\{1,-1\}$ given by $\varphi(a)=\Big(\dfrac{a}{n}\Big)\Big(\dfrac{a}{p}\Big)$. Then the kernel of $\varphi$ is $L(n;p)$ and so it follows that  $L(n;p)$ is a subgroup having index at most two in $U(n)$. 

\begin{prop}
Let $p$ be a prime divisor of $n$. Then $L(n;p)$ has index two in $U(n)$, unless $p$ is the unique prime divisor of $n$ such that $v_{p}(n)$ is odd.
\end{prop}
 
\begin{proof}
Let $n=p^rm$ where $m$ is coprime to $p$. Let $\psi:U(n)\to U(p^r)\times U(m)$ be the isomorphism which is given by the Chinese remainder theorem. If we show that $-1$ is in the image of the homomorphism $\varphi:U(n)\to\{1,-1\}$ which was defined above, then $ker \,\varphi$ will be a subgroup of index two in $U(n)$.

\begin{case}
$r$ is odd.
\end{case}

Suppose $m$ is a square. For any $a\in U(n)$, we have $\varphi(a)=\Big(\dfrac{a}{m}\Big)\Big(\dfrac{a}{p^{r+1}}\Big)=1$. Thus $\varphi$ is the trivial map and so $L(n;p)=U(n)$. 

Suppose $m$ is not a square. By Proposition \ref{indexs} we see that $S(m)$ has index two in $U(m)$. For $c\in U(m)\setminus S(m)$, there exists $a\in U(n)$ such that $\psi(a)=(1,c)$. Thus $\Big(\dfrac{a}{p}\Big)=\Big(\dfrac{1}{p}\Big)=1$ and so $\varphi(a)=\Big(\dfrac{a}{n}\Big)=\Big(\dfrac{a}{m}\Big)=-1$.

\bigskip

\begin{case}
$r$ is even.
\end{case}

Let $m=1$. Then $\Big(\dfrac{a}{n}\Big)=\Big(\dfrac{a}{p}\Big)^r=1$ and so $\varphi(a)=\Big(\dfrac{a}{p}\Big)$. Let $b\in U(p)\setminus Q_p$. There exists $a\in U(n)$ such that $f_{n|p}(a)=b$. Thus $\varphi(a)=\Big(\dfrac{b}{p}\Big)=-1$. 

Suppose $m>1$. Let $b\in U(p)\setminus Q_p$. There exists $b'\in U(p^r)$ such that $f_{p^r|p}(b')=b$. For $c\in S(m)$, there exists $a\in U(n)$ such that $\psi(a)=(b',c)$. Thus $\Big(\dfrac{a}{n}\Big)=\Big(\dfrac{b}{p}\Big)^r\Big(\dfrac{c}{m}\Big)=1$ and so $\varphi(a)=\Big(\dfrac{a}{p}\Big)=\Big(\dfrac{b}{p}\Big)=-1$.
\end{proof}

\smallskip

\begin{rem}
In particular if $n$ is a prime $p$, then $L(n;p)=U(p)$.
\end{rem}

\begin{lem}\label{s2l}
Let $p'$ be a prime divisor of $n$ and $p$ be a prime divisor of $n$ which is coprime with $n'=n/p$. Then $S(n')\su f_{n|n'}\big(L(n;p')\big)$. 
\end{lem}

\begin{proof}
Let $b\in S(n')$ where $n'=n/p$. As $p$ is coprime with $n'$, by the Chinese remainder theorem we have an isomorphism $\psi:U(n)\to U(n')\times U(p)$. 

Suppose $p=p'$. Let $a\in U(n)$ such that $\psi(a)=(b,1)$. Thus $f_{n|n'}(a)=b$ and $a\in L(n;p')$ as $$\Big(\dfrac{a}{n}\Big)=\Big(\dfrac{b}{n'}\Big)\Big(\dfrac{1}{p}\Big)=\Big(\dfrac{1}{p}\Big)=\Big(\dfrac{a}{p}\Big)=\Big(\dfrac{a}{p'}\Big).$$

Suppose $p\neq p'$. Then $p'$ divides $n'$. Let $c\in U(p)$ such that $\Big(\dfrac{c}{p}\Big)=\Big(\dfrac{b}{p'}\Big)$ and let $a\in U(n)$ such that $\psi(a)=(b,c)$. Thus $f_{n|n'}(a)=b$ \\
and $a\in L(n;p')$ as 
$$\Big(\dfrac{a}{n}\Big)=\Big(\dfrac{b}{n'}\Big)\Big(\dfrac{c}{p}\Big)=\Big(\dfrac{c}{p}\Big)=\Big(\dfrac{b}{p'}\Big)=\Big(\dfrac{a}{p'}\Big).
\eqno\qed$$
\renewcommand{\qedsymbol}{}
\vspace{-\baselineskip}
\end{proof}

\begin{lem}\label{u2l}
Let $p'$ be a prime divisor of $n$ which is coprime to $n'=n/p'$. Then $U(p')\su f_{n|p'}\big(L(n;p')\big)$.  
\end{lem}

\begin{proof}
Let $b\in U(p')$. As $n'=n/p'$ is coprime to $p'$, by the Chinese remainder theorem we have an isomorphism $\psi:U(n)\to U(n')\times U(p')$. There exists $a\in U(n)$ such that $\psi(a)=(1,b)$. Thus $f_{n|p'}(a)=b$ and $a\in L(n;p')$ as $$\Big(\dfrac{a}{n}\Big)=\Big(\dfrac{1}{n'}\Big)\Big(\dfrac{b}{p'}\Big)=\Big(\dfrac{b}{p'}\Big)=\Big(\dfrac{a}{p'}\Big).\eqno\qed$$
\renewcommand{\qedsymbol}{}
\vspace{-\baselineskip}
\end{proof}

\begin{obs}\label{obs3}
Let $A\su \Z_n$, $S$ be a sequence in $\Z_n$ and $n=m_1m_2$ where $m_1$ and $m_2$ are coprime. For $i=1,2$, let $A_i\su\Z_{m_i}$ be given and $S_i$ denote the image of the sequence $S$ under $f_{n|m_i}$. Suppose $A_1\times A_2\su \psi(A)$ where $\psi:U(n)\to U(m_1)\times U(m_2)$ is the isomorphism given by the Chinese remainder theorem. If $S_1$ is an $A_1$-weighted zero-sum sequence in $\Z_{m_1}$ and $S_2$ is an $A_2$-weighted zero-sum sequence in $\Z_{m_2}$, then $S$ is an $A$-weighted zero-sum sequence in $\Z_n$. 
\end{obs}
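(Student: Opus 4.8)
The plan is to build the weights for $S$ coordinate by coordinate and glue the two weighted zero-sums coming from $S_1$ and $S_2$ via the Chinese remainder theorem. Write $S:(x_1,\ldots,x_l)$, so that $S_i:(x_1^{(i)},\ldots,x_l^{(i)})$, where $x_j^{(i)}$ denotes the image of $x_j$ under the natural map $\Z_n\to\Z_{m_i}$. By hypothesis there are weights $a_j^{(1)}\in A_1$ with $\sum_j a_j^{(1)}x_j^{(1)}=0$ in $\Z_{m_1}$, and weights $a_j^{(2)}\in A_2$ with $\sum_j a_j^{(2)}x_j^{(2)}=0$ in $\Z_{m_2}$ (note that $\psi(A)$ makes sense precisely because $A\su U(n)$, as for all the weight sets considered here).

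The key step is to lift each pair of coordinate weights to a single weight in $A$. For each $j$, the pair $(a_j^{(1)},a_j^{(2)})$ lies in $A_1\times A_2$, and since $A_1\times A_2\su\psi(A)$, I can choose $a_j\in A$ with $\psi(a_j)=(a_j^{(1)},a_j^{(2)})$; that is, $a_j$ reduces to $a_j^{(1)}$ modulo $m_1$ and to $a_j^{(2)}$ modulo $m_2$ under the respective natural maps.

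Now set $x=\sum_j a_j x_j\in\Z_n$. Taking the image of $x$ under the natural map $\Z_n\to\Z_{m_1}$ yields $\sum_j a_j^{(1)}x_j^{(1)}=0$, so $m_1\mid x$; taking the image under $\Z_n\to\Z_{m_2}$ yields $\sum_j a_j^{(2)}x_j^{(2)}=0$, so $m_2\mid x$. Because $m_1$ and $m_2$ are coprime and $n=m_1m_2$, it follows that $x=0$ in $\Z_n$, which exhibits $S$ as an $A$-weighted zero-sum sequence with the weights $a_1,\ldots,a_l$.

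There is no genuine obstacle here; the entire content is the observation that the hypothesis $A_1\times A_2\su\psi(A)$ is exactly what permits the two coordinate weight choices to be made independently and still be realized by a single honest weight $a_j\in A$. The only point requiring a moment's care is that both reductions of $x$ must use the \emph{same} lifted weights $a_j$, which holds by construction, after which the coprimality of $m_1$ and $m_2$ closes the argument.
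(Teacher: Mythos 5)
Your proof is correct: lifting the coordinate weights through $\psi$ and using coprimality of $m_1,m_2$ to conclude $x=0$ is exactly the routine verification intended here, which the paper omits (it states only that the observation ``can be verified easily''). Your parenthetical remark that the hypothesis $A_1\times A_2\su\psi(A)$ tacitly forces $A\su U(n)$ and $A_i\su U(m_i)$ is a fair and worthwhile clarification of the statement.
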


\begin{lem}\label{gl'}
Let $n$ be squarefree and $p'$ be a prime divisor of $n$. Suppose $n'=n/p'$ and $\psi:U(n)\to U(n')\times U(p')$ is the isomorphism given by the Chinese remainder theorem. Then $S(n')\times U(p')\su \psi\big(L(n;p')\big)$.
\end{lem}

\begin{proof}
Let $(b,c)\in S(n')\times U(p')$. There exists $a\in U(n)$ such that $\psi(a)=(b,c)$. Then $a\in L(n;p')$ as $$\Big(\dfrac{a}{n}\Big)=\Big(\dfrac{b}{n'}\Big)\Big(\dfrac{c}{p'}\Big)=\Big(\dfrac{c}{p'}\Big)=\Big(\dfrac{a}{p'}\Big).\eqno\qed$$
\renewcommand{\qedsymbol}{×}
\vspace{-\baselineskip}
\end{proof}

\section{The constants $D_{L(n;p)}(n)$ and $C_{L(n;p)}(n)$}

\begin{lem}\label{gl}
Let $n$ be squarefree, $p'$ be a prime divisor of $n$, $S=(x_1,\ldots,x_l)$ be a sequence in $\Z_n$ such that for every prime divisor $p$ of $n$ at least two terms of $S$ are coprime to $p$. Assume that $S'$ denotes the image of $S$ under $f_{n|n'}$, where $n'=n/p'$. Suppose at most one term of $S'$ is a unit or suppose there is a prime divisor $p$ of $n/p'$ such that at least three terms of $S$ are coprime to $p$. Then $S$ is an $L(n;p')$-weighted zero-sum sequence. 
\end{lem}

\begin{proof}
Let $n'=n/p'$ and $S'$ denote the image of the sequence $S$ under $f_{n|n'}$.  As at least two terms of $S^{(p')}$ are coprime to $p'$, by Lemma \ref{gri} we have $S^{(p')}$ is a $U(p')$-weighted zero-sum sequence.

If at most one term of $S'$ is a unit, by Lemma \ref{gs} we see that $S'$ is an $S(n')$-weighted zero-sum sequence in $\Z_{n'}$, as $n'$ is squarefree and for every prime divisor $p$ of $n'$ at least two terms of $S'$ are coprime to $p$. 

If there is a prime divisor $p$ of $n/p'$ such that at least three terms of $S$ are coprime to $p$, by Lemma \ref{gs'} we get that $S'$ is an $S(n')$-weighted zero-sum sequence, since at least three terms of $S'$ are coprime to $p$. 

As $n$ is squarefree, $n'$ is coprime to $p'$. Let $\psi:U(n)\to U(n')\times U(p')$ be the isomorphism given by the Chinese remainder theorem. By Lemma \ref{gl'} we see that $S(n')\times U(p')\su \psi\big(L(n;p')\big)$. Hence, by Observation \ref{obs3} we see that $S$ is an $L(n;p')$-weighted zero-sum sequence. 
\end{proof}

\begin{thm}\label{ld}
Let $n$ be a squarefree number whose every prime divisor is at least seven. Suppose that $p'$ is a prime divisor of $n$ and $\Omega(n)\neq 2$. Then $D_{L(n;p')}(n)=\Omega(n)+1$. 
\end{thm}

\begin{proof}
Let $p'$ be a prime divisor of $n$. We have $D_{U(n)}(n)\leq D_{L(n;p')}(n)$, as $L(n;p')\su U(n)$. From Theorem \ref{un} we have $D_{U(n)}(n)=\Omega(n)+1$ and so $D_{L(n;p')}(n)\geq \Omega(n)+1$. If $\Omega(n)=1$, then $L(n;p')=U(n)$ and so by Theorem \ref{un} we have $D_{L(n;p')}(n)=2$. 

Let $n$ be a squarefree number whose every prime divisor is at least seven. Suppose $\Omega(n)\geq 3$ and $S=(x_1,\ldots,x_l)$ is a sequence in $\Z_n$ of length $\Omega(n)+1$. To show that $D_{L(n;p')}(n)\leq \Omega(n)+1$, it suffices to show that $S$ has an $L(n;p')$-weighted zero-sum subsequence. 

\begin{case}
There is a prime divisor $p$ of $n$ such that at most one term of $S$ is coprime to $p$.
\end{case}

Let us assume without loss of generality that $x_i$ is divisible by $p$ for $i>1$ and let $T$ denote the subsequence $(x_2,\ldots,x_l)$ of $S$. Let $n'=n/p$ and let $T'$ denote the sequence in $\Z_{n'}$ which is the image of $T$ under $f_{n|n'}$. We have $n'$ is squarefree, $\Omega(n')\geq 2$, every prime divisor of $n'$ is at least seven and $T'$ has length $\Omega(n')+1$. 

So it follows from Theorem \ref{dsn} that $T'$ has an $S(n')$-weighted zero-sum subsequence. As $n$ is squarefree, $p$ is coprime to $n'$. Now by Lemmas \ref{lifts'} and \ref{s2l}, we see that $T$ has an $L(n;p')$-weighted zero-sum subsequence.

\begin{case}
For every prime divisor $p$ of $n/p'$ exactly two terms of $S$ are coprime to $p$, and at least two terms of $S$ are coprime to $p'$.
\end{case} 

Let $n'=n/p'$ and $S'=(x_1',\ldots,x_l')$ be the image of  $S$ under $f_{n|n'}$. Suppose at most one term of $S'$ is a unit. By Lemma \ref{gl} we see that $S$ is an $L(n;p')$-weighted zero-sum sequence. Suppose at least two terms of $S'$ are units. By the assumption in this case we see that exactly two terms of $S'$ are units, say $x_{j_1}'$ and $x_{j_2}'$ and the other terms of $S'$ are zero. It follows that all terms of $S$ are divisible by $n'$ except $x_{j_1}$ and $x_{j_2}$.  

Hence, if some term $f_{n|p'}(x_j)$ of $S^{(p')}$ is zero for $j\neq j_1,j_2$, then $x_j=0$. So we can assume that all the terms of $S^{(p')}$ are non-zero except possibly two terms. As $k\geq 3$, the sequence $S$ has length at least 4. Let $T$ be a subsequence of $S$ of length at least two which does not contain the terms $x_{j_1}$ and $x_{j_2}$. 

As all the terms of $T^{(p')}$ are non-zero and as $T^{(p')}$ has length at least 2, by Lemma \ref{gri} we see that $T^{(p')}$ is a $U(p')$-weighted zero-sum sequence. Also all the terms of $T$ are divisible by $n'$.  Hence, by Lemmas \ref{lifts'} and \ref{u2l} we see that $T$ is an $L(n;p')$-weighted zero-sum subsequence of $S$.

\begin{case}
Given any prime divisor $p$ of $n$ at least two terms of $S$ are coprime to $p$, and there is a prime divisor $p$ of $n/p'$ such that at least three terms of $S$ are coprime to $p$.
\end{case}

In this case, we are done by Lemma \ref{gl}.
\end{proof}

\begin{thm}\label{ld2}
Let $n=p'q$ where $p'$ and $q$ are distinct primes which are at least seven. Then $D_{L(n;p')}(n)=4$.
\end{thm}

\begin{proof}
Let $n$ be as in the statement of the theorem. As $L(n;p')\su U(n)$, we have $f_{n|p'}\big(L(n;p')\big)\su U(p')$. Also observe that $f_{n|q}\big(L(n;p')\big)\su Q_q$. As from Theorem \ref{un} we have $D_{U(p')}(p')=2$ and from Theorem \ref{q} we have $D_{Q_q}(q)=3$, by Lemma \ref{dadd} it follows that $D_{L(n;p')}(n)\geq 4$. 

Let $S=(x_1,x_2,x_3,x_4)$ be a sequence in $\Z_n$. We will show that $S$ has an $L(n;p')$-weighted zero-sum subsequence. It will follow that $D_{L(n;p')}(n)=4$. If some term of $S$ is zero, then we are done. So we can assume that all the terms of $S$ are non-zero.   We continue with the notations and terminology which were used in the proof of Theorem \ref{ld}. 

\begin{case}
There is a prime divisor $p$ of $n$ such that at most one term of $S$ is coprime to $p$.
\end{case}

We can find a subsequence $T$ of $S$ of length 3 such that all the terms of $T$ are divisible by $p$. Let $n'=n/p$ and let $T'$ be the sequence in $\Z_{n'}$ which is the image of $T$ under $f_{n|n'}$. As all the terms of $S$ are non-zero, no term of $T$ can be divisible by $n'$. So $T'$ is a sequence of non-zero terms of length 3. As $n'$ is a prime, $S(n')=Q_{n'}$ and by Corollary \ref{cm} we see that $T'$ is a $Q_{n'}$-weighted zero-sum subsequence. Thus, by Lemmas \ref{lifts'} and \ref{s2l} we see that $T$ is an $L(n;p')$-weighted zero-sum subsequence of $S$.

\begin{case}
Exactly two terms of $S$ are coprime to $q$.
\end{case}

Let us assume that $x_1$ and $x_2$ are coprime to $q$ and let $T:(x_3,x_4)$. The sequence $T^{(q)}$ has both terms zero and hence it is an $S(q)$-weighted zero-sum sequence. As $S$ has all terms non-zero, we see that both the terms of $T^{(p')}$ are non-zero, and so by Lemma \ref{gri} we get that $T^{(p')}$ is a $U(p')$-weighted zero-sum sequence. Let $\psi:U(n)\to U(q)\times U(p')$ be the isomorphism given by the Chinese remainder theorem. By Lemma \ref{gl'} we have $S(q)\times U(p')\su \psi\big(L(n;p')\big)$. Thus, by Observation \ref{obs3} we see that $T$ is an $L(n;p')$-weighed zero-sum subsequence of $S$. 

\begin{case}
At least three terms of $S$ are coprime to $q$, and at least two terms of $S$ are coprime to $p'$.
\end{case}

In this case, we are done by Lemma \ref{gl}.
\end{proof}

\begin{thm}\label{lc}
Let $n$ be squarefree whose every prime divisor is at least seven. Suppose $p'$ is a prime divisor of $n$ and $\Omega(n)\neq 2$. Then $C_{L(n;p')}(n)=2^{\Omega(n)}$. 
\end{thm}

\begin{proof}
If $n$ is a prime, then $n=p'$ and $L(n;p')=U(p')$. So from Theorem \ref{un} we have  $C_{L(n;p')}(n)=2$. Let $n=p_1\ldots p_k$ where $k\geq 3$ and let $p'=p_k$. As $L(n;p')\su U(n)$,  we have $C_{L(n;p')}(n)\geq C_{U(n)}(n)$. So from Theorem \ref{un}, we have $C_{L(n;p')}(n)\geq 2^{\Omega(n)}$. Let $S=(x_1,\ldots,x_l)$ be a sequence in $\Z_n$ of length $l=2^{\Omega(n)}$. If we show that $S$ has an $L(n;p')$-weighted zero-sum subsequence of consecutive terms, it will follow that $C_{L(n;p')}(n)\leq 2^{\Omega(n)}$. If any term of $S$ is zero, then we get an $L(n;p')$-weighted zero-sum subsequence of $S$ of length 1. 

\begin{case}
There is a prime divisor $p$ of $n$ such that at most one term of $S$ is coprime to $p$.
\end{case}

We can find a subsequence say $T$ of consecutive terms of $S$ of length $l/2$ such that all the terms of $T$ are divisible by $p$. Let $n'=n/p$ and let $T'$ be the image of $T$ under $f_{n|n'}$. As $\Omega(n')=\Omega(n)-1\geq 2$ and as $T'$ has length $2^{\Omega(n')}$, by Theorem \ref{csn} we see that $T'$ has an $S(n')$-weighted zero-sum subsequence of consecutive terms. By Lemma \ref{s2l} we get $S(n')\su f_{n|n'}\big(L(n;p')\big)$ and so by Lemma \ref{lifts'} we get that $T$ (and hence $S$) has an $L(n;p')$-weighted zero-sum subsequence of consecutive terms. 

\begin{case}
For every prime divisor $p$ of $n/p'$ exactly two terms of $S$ are coprime to $p$, and at least two terms of $S$ are coprime to $p'$.
\end{case}

In this case, we can use a slight modification of the argument which was used in the same case of the proof of Theorem \ref{ld}. We just observe that in a sequence $S$ of length at least eight which has at most two terms which are not divisible by $n'$, we can find a subsequence $T$ of consecutive terms of length at least two such that all the terms of $T$ are divisible by $n'$. 

\begin{case}
For every prime divisor $p$ of $n$ at least two terms of $S$ are coprime to $p$, and there is a prime divisor $p$ of $n/p'$ such that at least three terms of $S$ are coprime to $p$.
\end{case}

In this case, we are done by Lemma \ref{gl}.
\end{proof}

\begin{thm}\label{lc2}
Let $n=p'q$ where $p'$ and $q$ are distinct primes which are at least seven. Then $C_{L(n;p')}(n)=6$.
\end{thm}

\begin{proof}
Let $n$ be as in the statement of the theorem. By Theorems \ref{un} and \ref{q}, we see that $C_{U(p')}(p')=2$ and $C_{Q_q}(q)=3$. Also as $f_{n|p'}\big(L(n;p')\big)\su U(p')$ and $f_{n|q}\big(L(n;p')\big)\su Q_q$, by Lemma \ref{clb} it follows that $C_{L(n;p')}(n)\geq 6$.

Let $S=(x_1,\ldots,x_6)$ be a sequence in $\Z_n$. It is enough to show that $S$ has an $L(n;p')$-weighted zero-sum subsequence of consecutive terms. We can assume that all the terms of $S$ are non-zero. 

\begin{case}
There is a prime divisor $p$ of $n$ such that at most one term of $S$ is coprime to $p$.
\end{case}

In this case, we can find a subsequence $T$ of $S$ of consecutive terms of length three whose all terms are divisible by $p$. As all the terms of $S$ are non-zero, all the terms of $T$ are coprime to $n'$ where $n'=n/p$. If $T'$ is the image of $T$ under $f_{n|n'}$, then $T'$ is a sequence of non-zero terms of length three in $\Z_{n'}$ . As $n'$ is a prime, $S(n')=Q_{n'}$ and by Corollary \ref{cm} we get that $T'$ is a $Q_{n'}$-weighted zero-sum sequence. By using Lemmas \ref{lifts'} and \ref{s2l} it follows that $T$ is an $L(n;p')$-weighted zero-sum subsequence of $S$ of consecutive terms.

\begin{case}
Exactly two terms of $S$ are coprime to $q$.
\end{case}

Let the terms $x_{j_1}$ and $x_{j_2}$ be coprime to $q$. As $S$ has length six, we can find a subsequence $T$ of consecutive terms of $S$ of length two, which does not have any term from the positions $j_1$ and $j_2$. As $x_j$ is divisible by $q$ when $j\neq j_1,j_2$, all the terms of $T$ are divisible by $q$. As $S$ has all terms non-zero, all the terms of $T$ are coprime to $p'$. 

By Lemma \ref{gri} we get that $T^{(p')}$ is a $U(p')$-weighted zero-sum sequence. So by Lemmas \ref{lifts'} and \ref{u2l} it follows that $T$ is an $L(n;p')$-weighted zero-sum subsequence of consecutive terms of $S$. 

\begin{case}
At least three terms of $S$ are coprime to $q$, and at least two terms of $S$ are coprime to $p'$.
\end{case}

In this case, we are done by Lemma \ref{gl}.
\end{proof}

\section{Concluding remarks}

We have $S(15)=\{1,2,4,8\}$. We can check that the sequence $S:(1,1,1)$ does not have any $S(15)$-weighted zero-sum subsequence. So $D_{S(15)}(15)\geq 4$ and hence $D_{S(15)}(15)>\Omega(15)+1$. This shows that the statement of Theorem \ref{dsn} is not true in general if some prime divisor of $n$ is smaller than seven. It will be interesting to find the Davenport constant $D_{S(n)}(n)$ for non-squarefree $n$. 

In \cite{ACFKP}, it was proposed to characterize the weight-sets $A\su\Z_n$ which have the same value of $D_A(n)$. In this paper we have seen that if $A\su \Z_n$ is such that $S(n)\su A\su U(n)$ and if $n$ is not a prime, then $D_A(n)=D_{U(n)}(n)$. We have also seen that if $A\su\Z_n$ is such that $L(n;p)\su A\su U(n)$ and if $\Omega(n)\neq 2$, then again $D_A(n)=D_{U(n)}(n)$. We can try to see whether this can happen for some other weight-sets $A\su\Z_n$.

\bigskip

{\bf Acknowledgement.}
Santanu Mondal would like to acknowledge CSIR, Govt. of India, for a research fellowship.

\end{document}